\newtheorem{theorem}{Theorem}[section]
\newaliascnt{conj}{theorem}
\newaliascnt{cor}{theorem}
\newaliascnt{lemma}{theorem}
\newaliascnt{fact}{theorem}
\newaliascnt{claim}{theorem}
\newaliascnt{prop}{theorem}
\newaliascnt{definition}{theorem}
\newtheorem{prop}[prop]{Proposition}
\newtheorem{definition}[definition]{Definition}
\theoremstyle{definition}
\newaliascnt{example}{theorem}
\theoremstyle{remark}
\newaliascnt{rmk}{theorem}
\newtheorem{remark}[rmk]{Remark}
\def\sek~{\S{}}
\numberwithin{equation}{section}
\newcommand{\bisec}{{\rm bisec}}
\newcommand{\kahler}{K\"{a}hler}
\newcommand{\sk}{\textbf{\textsf{s}}}
\newcommand{\ck}{\textbf{\textsf{c}}}
\newcommand{\KH}{\textsf{KH}_{k}}
\renewcommand{\H}{\textsf{H}_{k}}
\newcommand{\hess}{\operatorname{Hess}}
\newcommand{\dis}{\operatorname{d}}
\newcommand{\dist}{\operatorname{d}}
\newcommand{\Ric}{\operatorname{Ric}}
\newcommand{\II}{\operatorname{II}}
\newcommand{\RR}{\mathds{R}}
\renewcommand{\SS}{\mathbf{S}}
\newcommand{\CP}{\mathds{CP}}
\begin{document}
\title[Comparison for Manifold with Boundary]{Comparison Theorems for Manifold with Mean Convex Boundary}
\author{Jian Ge}
\maketitle
\begin{abstract}
Let $M^n$ be an $n$-dimensional Riemannian manifold with boundary $\partial M$. Assume that Ricci curvature is bounded from below by $(n-1)k$, for $k\in \RR$, we give a sharp estimate of the upper bound of $\rho(x)=\dis(x, \partial M)$, in terms of the mean curvature bound of the boundary. When $\partial M$ is compact, the upper bound is achieved if and only if $M$ is isometric to a disk in space form. A \kahler\ version of estimation is also proved. Moreover we prove a Laplace comparison theorem for distance function to the boundary of \kahler\ manifold and also estimate the first eigenvalue of the real Laplacian.
\end{abstract}
\section{Introduction}
Let $M^n$ be an $n$-dimensional Riemannian manifold with boundary $\partial M$, $\rho(x)=\dist(x, \partial M)$ be the distance function to the boundary. In a recent paper \cite{Li2012}, the author proved $\max_{x\in M}\rho(x)\le 1/k$ under the assumptions that the Ricci curvature $\Ric \ge 0$ and the mean curvature $H$ of the boundary satisfies $H \ge (n-1)k$ for $k>0$, the equality holds if and only if $M$ isometric to the Euclidean ball of radius $1/k$. The argument is essentially the well known Jacobi field estimates. The idea can be traced back to \cite{HK1978}, although estimations of volume is main topic there instead of the distance functions, also they treated compact manifold only. Under a stronger assumption that the sectional curvatures bounded from below, Dekster estimated $\rho$ and lenght of more general curves in \cite{Dek1977}. Alexander and Bishop studied more general Alexandrov spaces with certain convexity condition on the boundary in \cite{AB2010}, among many other things, one estimate of the upper bound of $\rho$ has also been derived for such spaces. In this note we generalize the above mentioned theorem to Riemannian manifold with lower Ricci curvature bound $k\in \RR$ and give an unified proof for all $k$. i.e. we prove the following
\begin{theorem}\label{thm:k}
Let $M^n$ be a complete $n$-dimensional Riemannian manifold with lower Ricci curvature bound $(n-1)k$ and boundary $\partial M$. Assume the mean curvature $H$ of the boundary satisfies $H\ge (n-1)h$. For the case $k\le 0$ we assumer further that $h>\sqrt{-k}$. Let $\rho(x)=\dis(x, \partial M)$ be the distance to the boundary. Then

\begin{equation} \label{eq:2}
\rho(x)\le \left\{ \begin{aligned}
\frac{1}{\sqrt{|k|}}&\coth^{-1}\Big(\frac{h}{\sqrt{|k|}}\Big) \quad&{\rm for }\quad k<0 \\
&\frac{1}{h}\quad &{\rm for } \quad  k=0\\
\frac{1}{\sqrt{k}}&\cot^{-1}\Big(\frac{h}{\sqrt{k}}\Big)\quad &{\rm for } \quad k>0\\
\end{aligned} \right.
\end{equation}
\end{theorem}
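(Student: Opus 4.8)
The plan is to reduce everything to a one-dimensional comparison along a distance-minimizing geodesic and run the standard Riccati (Bochner) argument for $\rho$, with the mean-curvature hypothesis entering only through the initial condition. Fix $x \in \interior M$ with $\rho(x) = d$. First I would check the distance is attained: a minimizing sequence $p_i \in \partial M$ stays in a bounded set, so by completeness and closedness of $\partial M$ it subconverges to some $p \in \partial M$. The minimizing unit-speed geodesic $\gamma : [0,d] \to M$ from $p = \gamma(0)$ to $x = \gamma(d)$ then meets $\partial M$ orthogonally by first variation, so $\gamma'(0) = \nu$ is the inward normal and $\rho(\gamma(t)) = t$. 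Since $\gamma$ is minimizing, $\partial M$ has no focal point along $\gamma|_{[0,d)}$, hence $\rho$ is smooth near $\gamma(t)$ for every $t \in [0,d)$.

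Next I would set $m(t) = \Delta\rho(\gamma(t))$, which is the mean curvature of the level set $\{\rho = t\}$ with respect to $\nabla\rho$. Differentiating the shape operator along $\gamma$ (the matrix Riccati equation $S' + S^2 + R(\cdot,\gamma')\gamma' = 0$), taking traces, and combining Cauchy--Schwarz $\tr(S^2) \ge (\tr S)^2/(n-1)$ with $\Ric(\gamma',\gamma') \ge (n-1)k$ gives the scalar inequality
\[
m'(t) \le -\frac{m(t)^2}{n-1} - (n-1)k, \qquad t \in [0,d).
\]
With the convention that round boundary spheres are mean-convex (inward normal), the initial value is $m(0) = -H \le -(n-1)h$. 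Writing $u = m/(n-1)$, this reads $u' \le -u^2 - k$ with $u(0) \le -h$.

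The comparison model is $v' = -v^2 - k$, $v(0) = -h$, equivalently $v = f'/f$ where $f$ solves $f'' + kf = 0$, $f(0) = 1$, $f'(0) = -h$. Setting $w = u - v$ gives $w' \le -(u+v)w$ with $w(0) \le 0$, so a linear integrating-factor argument yields $u \le v$ as long as both are finite. The function $v$ blows down to $-\infty$ exactly at $T$, the first positive zero of $f$. Computing this zero from the explicit solutions — $\cosh/\sinh$ for $k<0$, linear for $k=0$, $\cos/\sin$ for $k>0$ — reproduces the right-hand sides of \eqref{eq:2}: $\tfrac{1}{\sqrt{|k|}}\coth^{-1}(h/\sqrt{|k|})$ (where the hypothesis $h > \sqrt{-k}$ is exactly what forces $f$ to vanish), $1/h$, and $\tfrac{1}{\sqrt{k}}\cot^{-1}(h/\sqrt{k})$.

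Finally I would conclude by contradiction: if $d = \rho(x) > T$, then $[0,T] \subset [0,d)$, so $u$ is smooth and finite on $[0,T]$; yet $u \le v$ with $v(t) \to -\infty$ as $t \to T^-$ forces $u(t) \to -\infty$, contradicting finiteness of $m$ away from focal points. Hence $\rho(x) \le T$, which is \eqref{eq:2}. I expect the only real friction to be bookkeeping rather than geometry: pinning down the sign convention for $H$ so the initial condition points the right way, and carrying out the three first-zero computations; the heart of the matter, the Riccati inequality and its scalar comparison, is routine. Tracing the equality cases in Cauchy--Schwarz and in the comparison would then be the natural entry point to the rigidity statement for compact $\partial M$.
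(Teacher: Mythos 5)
Your argument is correct, including the sign bookkeeping ($\Delta\rho = -H$ along $\partial M$, so $m(0)\le -(n-1)h$) and the three first-zero computations, but it takes a genuinely different route from the paper's. The paper proves \autoref{thm:k} by a one-shot second-variation argument: along the minimizing geodesic $\gamma$ from $x$ to the foot point $p\in\partial M$ it takes test fields $V_i(s)=f(s)E_i(s)$, where $f(s)=\sk_k(s)/\sk_k(\ell)$ vanishes at the \emph{interior} endpoint and $E_i$ is a parallel orthonormal frame of $T_p\partial M$; nonnegativity of the index form plus one integration by parts gives $(n-1)f'(\ell)\ge H_p$, i.e.\ $\ck_k(\ell)/\sk_k(\ell)\ge h$, which is exactly \eqref{eq:2}. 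That argument needs no regularity theory for $\rho$ at all: no focal points, no cut locus, no smoothness of $\rho$ along $\gamma$ is ever invoked, only that $\gamma$ is minimizing. Your Riccati/Bochner argument instead proves en route the Laplace comparison $\Delta\rho\le -\H(h-\rho)$, which is the paper's \autoref{thm:realLapComp} --- stated there without proof and attributed implicitly to Heintze--Karcher --- and then extracts the distance bound from the blow-up time of the comparison solution $v=f'/f$. The cost is the regularity input you gesture at (smoothness of $\rho$ near $\gamma(t)$ for $t<d$ requires both absence of focal points and uniqueness of the minimizing geodesic, the standard cut-locus facts, which deserve explicit citation); the gain is that you obtain the stronger pointwise differential inequality that the paper needs anyway for the rigidity \autoref{thm:rigidity}, where it is combined with the classical comparison for $r(x)=\dist(x,x_0)$ in a Cheeger--Gromoll-style minimum-principle argument --- so your closing remark that the equality analysis is the entry point to rigidity matches the paper's actual strategy.
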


If we assume further that $\partial M$ is bounded, then the upper bound in \eqref{eq:2} implies that $M$ is compact. Note also that the condition $h> \sqrt{-k}$ for $k\le 0$ is also sharp. Since the distance function $\rho(x)$ defined on the warp product $[0, \infty) \times_{e^{-t}} \SS^{n-1}$ is unbounded and satisfies the mean convex condition for $h=\sqrt{-k}=1$. Note one can also give a Laplace comparison of $\rho$ for manifold with lower Ricci curvature bound and mean convex boundary, which is implicitly proved in \cite{HK1978}. Hence we will only list theorem here and omitted the proof. However, the proof is similar as the \kahler\ version comparison proved in section 2. Define
$$
\H(r)=(n-1)\ck_{k}(r)/\sk_{k}(r),
$$
which is the mean curvature of the geodesic sphere of radius $r$ in simply connected space form with constant sectional curvature $k$, for $r\in (0, \pi/\sqrt k)$ if $k > 0$.  See \eqref{eq:sk} and \eqref{eq:ck} for the definition of $\sk_k$ and $\ck_k$. Hence we have
\begin{theorem}[Implicitly given in \cite{HK1978}]\label{thm:realLapComp}
Let $M^n$ be a complete $n$-dimensional Riemannian manifold with lower Ricci curvature bound $(n-1)k$ and boundary $\partial M$. Assume the mean curvature $H$ of the boundary satisfies $H\ge \H(h)>0$. Let $\rho(x)=\dis(x, \partial M)$ be the distance to the boundary. Then
$$
\Delta\rho(x)\le -\H(h - \rho(x)).
$$
\end{theorem}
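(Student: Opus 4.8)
The plan is to localize the statement to the normal geodesics emanating from $\partial M$ and reduce it to a scalar Riccati comparison. I would work first at a point $x$ where $\rho$ is smooth, and let $\gamma:[0,\ell]\to M$ (with $\ell=\rho(x)$) be the unit-speed minimizing geodesic from $\partial M$ to $x$, so that $\gamma(0)=p\in\partial M$, $\gamma'(0)$ is the inward unit normal, $\rho(\gamma(t))=t$, and $\nabla\rho=\gamma'$ along $\gamma$. The Hessian $A(t)=\hess\rho|_{\gamma(t)}$ is then a symmetric endomorphism with $A\gamma'=0$, and $m(t):=\Delta\rho(\gamma(t))=\tr A(t)$. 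I would begin from the Riccati equation $A'+A^2+R=0$ with $R=R(\,\cdot\,,\gamma')\gamma'$; taking the trace, invoking $\Ric(\gamma',\gamma')\ge(n-1)k$, and bounding $\tr(A^2)\ge m^2/(n-1)$ by Cauchy--Schwarz on the $(n-1)$-dimensional space $\gamma'^\perp$ gives
\begin{equation*}
m'(t)\le -\frac{m(t)^2}{n-1}-(n-1)k.
\end{equation*}

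Next I would pin down the model comparison function. With $g=\ck_k/\sk_k$ and the identity $\sk_k''=-k\,\sk_k$ one computes $g'=-k-g^2$, so $\H=(n-1)g$ obeys $\H'=-(n-1)k-\H^2/(n-1)$; therefore $\phi(t):=-\H(h-t)$ solves the same Riccati relation with equality. The initial condition is supplied by the boundary hypothesis: restricted to $T_p\partial M$, the operator $A(0)$ is the negative of the second fundamental form $\II$ of $\partial M$ taken with respect to the outward normal, so $m(0)=-H(p)\le-\H(h)=\phi(0)$, the sign matching the convention under which a space-form ball of radius $h$ has boundary mean curvature $\H(h)$.

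The comparison itself is then immediate: putting $w=m-\phi$ and subtracting the two Riccati relations yields $w'\le-\frac{m+\phi}{n-1}\,w$ with $w(0)\le0$, so multiplying by the integrating factor $\exp\!\big(\tfrac1{n-1}\int_0^t(m+\phi)\big)$ forces $w\le0$ on $[0,\ell]$; evaluating at $t=\ell$ gives $\Delta\rho(x)=m(\ell)\le\phi(\ell)=-\H(h-\rho(x))$. Here $\phi$ is smooth on $[0,\ell]$ because $\H(h)>0$ keeps $h-t$ in the range where $\sk_k>0$, and $\ell=\rho(x)$ is controlled by \autoref{thm:k}.

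The one genuinely delicate point is regularity: $\rho$ is only Lipschitz, and the argument above is valid solely on the open dense set where $\rho$ is smooth, namely the complement of the cut locus of $\partial M$. To reach an arbitrary $x$ I would use the standard support-function device: for small $\epsilon>0$ the equidistant $\Sigma_\epsilon=\{\rho=\epsilon\}$ is a smooth hypersurface near $\gamma(\epsilon)$, and $\psi_\epsilon(y)=\epsilon+\dist(y,\Sigma_\epsilon)$ is a smooth upper support function for $\rho$ at $x$ that agrees with it along $\gamma$; running the Riccati comparison from $\Sigma_\epsilon$, where the shape operator is finite and already satisfies the bound at $\gamma(\epsilon)$, and letting $\epsilon\to0$ yields the inequality in the barrier, hence distributional, sense. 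I expect this bookkeeping, rather than the Riccati estimate, to be the only subtle step, and it proceeds exactly as in the \kahler\ comparison of the next section.
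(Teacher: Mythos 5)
Your argument at the points where $\rho$ is smooth is correct, but it takes a genuinely different route from the paper's. The paper omits the proof of \autoref{thm:realLapComp}, pointing to the \kahler\ comparison in \autoref{sec:kahler}; that proof is variational: along the minimizing geodesic $\gamma$ from $\partial M$ to $x$ one builds test fields $V_i(s)=\tilde f(s)E_i(s)$ from the model solution $f(s)=\ck_k(s)-\frac{\ck_k(h)}{\sk_k(h)}\sk_k(s)$ normalized at $s=\ell$, uses the index-form minimality of $\partial M$-Jacobi fields (\autoref{prop:Minimality}) to get $\hess\rho(E_i(\ell),E_i(\ell))\le I_\gamma(V_i,V_i)$, and sums over a frame so that only $\Ric$ and the mean curvature enter --- the same mechanism as in the proof of \autoref{thm:k}. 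You instead trace the Riccati equation for $A=\hess\rho$, apply Cauchy--Schwarz and $\Ric\ge(n-1)k$, check that $\phi(t)=-\H(h-t)$ solves the scalar Riccati identity exactly, feed in $m(0)=-H(p)\le-\H(h)$, and close with an integrating factor. All of this checks out (the sign conventions, the identity $\H'=-(n-1)k-\H^2/(n-1)$, and the ODE comparison are all right), and the two methods are equivalent in strength here; the index-form route never needs the full Hessian, only traces, while yours exposes the Riccati structure that the paper's rigidity argument later exploits in \eqref{eq:hessiancal}. One small repair: you need $\ell<h$ strictly for $\phi$ to be finite on $[0,\ell]$, and \autoref{thm:k} only gives $\ell\le h$; but at a smooth point $|\nabla\rho|=1$, so $\rho$ cannot attain its maximum value $h$ there, and strictness is automatic.

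The genuine gap is in your cut-locus step --- which is precisely the point the paper itself elides (its \kahler\ proof says only that ``it suffices to estimate the Laplacian for non cut point $x$''). Your barrier $\psi_\epsilon=\epsilon+\dist(\cdot,\Sigma_\epsilon)$ does not do what you claim. If $\Sigma_\epsilon$ denotes the whole level set $\{\rho=\epsilon\}$, then $\psi_\epsilon\equiv\rho$ on $\{\rho\ge\epsilon\}$ (one inequality because $\rho$ is $1$-Lipschitz, the other because every path from $y$ to $\partial M$ must cross $\Sigma_\epsilon$), so $\psi_\epsilon$ is exactly as non-smooth as $\rho$. If instead $\Sigma_\epsilon$ means a small smooth piece of the level set around $\gamma(\epsilon)$, then $\psi_\epsilon$ is a genuine upper support function touching $\rho$ at $x$, but it is smooth at $x$ only if $x$ is not a focal point of that piece along $\gamma$ --- and the focal points of the equidistant piece along $\gamma$ coincide with those of $\partial M$: the normal geodesics of $\partial M$ are also the normal geodesics of $\Sigma_\epsilon$, so any $\partial M$-Jacobi field vanishing at $\ell$ restricts to a nontrivial $\Sigma_\epsilon$-Jacobi field vanishing at $\ell$. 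Stepping in to an equidistant hypersurface therefore gains nothing; this is where the boundary case differs from Calabi's trick for $\dist(\cdot,o)$, in which replacing the point $o$ by $\gamma(\epsilon)$ genuinely pushes conjugate points strictly later. At a focal cut point your $\psi_\epsilon$ is still not smooth and the argument fails. The standard repair is to bend as well as step in: for $\delta>0$ take a hypersurface $\tilde\Sigma_{\epsilon,\delta}$ tangent to $\Sigma_\epsilon$ at $\gamma(\epsilon)$, lying on the boundary side of $\Sigma_\epsilon$, whose shape operator at $\gamma(\epsilon)$ is that of $\Sigma_\epsilon$ minus $\delta I$. The Riccati comparison then places its first focal point strictly beyond $\ell$, so $\epsilon+\dist(\cdot,\tilde\Sigma_{\epsilon,\delta})$ is smooth near $x$, still lies above $\rho$, touches it at $x$, and its Laplacian at $x$ exceeds $-\H(h-\ell)$ by at most a constant times $\delta$; letting $\delta\to0$ and then $\epsilon\to0$ gives the inequality in the barrier sense. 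With that modification your proof is complete --- indeed more complete than the paper's.
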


Let $\SS^n_k$ be the simply connected $n$-dimensional space form of constant sectional curvature $k$. Let $D_X(p, r)$ denote the close disk of radius $r$ centered at point $p\in X$. i.e. $ D_X(p, r)=\{x\in X| \dist(x, p)\le r\}$. The proof of the rigidity part in \cite{Li2012} for $\Ric\ge 0$ does not extend to general lower curvature bound, hence we use \autoref{thm:realLapComp} and the idea in Cheeger-Gromoll's proof of splitting theorem \cite{CG1971}, one can prove the following rigidity theorem for \autoref{thm:k}.
\begin{theorem}\label{thm:rigidity}
Let $M^n$ be a complete $n$-dimensional Riemannian manifold with lower Ricci curvature bound $(n-1)k$ and boundary $\partial M$. Assume the mean curvature $H$ of the boundary satisfies $H\ge \H(h)$. Let $\rho(x)=\dis(x, \partial M)$ be the distance to the boundary. Let $\ell=\max_{x\in M}\rho(x)$, then $\ell=h$ if and only if $M$ is isometric to $D_{\SS^n_k}(h)$.
\end{theorem}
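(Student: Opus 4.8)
The reverse implication is immediate: on $D_{\SS^n_k}(h)$, with centre $o$, one has $\rho(x)=h-\dist(x,o)$, so $\ell=\rho(o)=h$, while $\partial M$ is a geodesic sphere of radius $h$ whose mean curvature equals exactly $\H(h)$. For the forward direction I would follow the Cheeger--Gromoll scheme suggested in the introduction. Fix an interior point $p$ with $\rho(p)=\ell=h$ and a unit-speed minimizing geodesic $\gamma\colon[0,h]\to M$ from $x_0:=\gamma(0)\in\partial M$ to $p=\gamma(h)$, which necessarily meets $\partial M$ orthogonally. Introduce the auxiliary function $r(x)=\dist(x,p)$ and set $f:=\rho+r$. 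Along $\gamma$ one has $\rho(\gamma(t))=t$ and $r(\gamma(t))=h-t$, whence $f\equiv h$ on $\gamma$; and for arbitrary $x$, choosing a nearest boundary point $q$ to $x$ gives $f(x)=\dist(x,q)+\dist(x,p)\ge\dist(p,q)\ge\rho(p)=h$. Thus $f\ge h$ on $M$ with equality along $\gamma$, which is precisely the configuration the splitting argument exploits, with $\gamma$ in the role of the line and $\rho$, $r$ the two Busemann-type functions.

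Next I would combine the two Laplacian comparisons. \autoref{thm:realLapComp} gives $\Delta\rho\le-\H(h-\rho)$, while the classical comparison for the distance from an interior point under $\Ric\ge(n-1)k$ gives $\Delta r\le\H(r)$, both in the barrier sense. Since $f\ge h$ forces $r\ge h-\rho$ pointwise, and $\H(s)=(n-1)\ck_k(s)/\sk_k(s)$ is strictly decreasing on its domain (it equals $(n-1)\sqrt k\cot(\sqrt k\,s)$, $(n-1)/s$, and $(n-1)\sqrt{-k}\coth(\sqrt{-k}\,s)$ in the three cases), I obtain $\Delta f\le\H(r)-\H(h-\rho)\le0$ in the barrier sense. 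Hence $f-h\ge0$ is a nonnegative, barrier-superharmonic function that vanishes at the interior points of $\gamma$, and the strong maximum principle forces $f\equiv h$ on $M$.

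From $f\equiv h$ I would extract the rigidity. Now $\rho=h-r$, so $\Delta\rho=-\Delta r$ wherever the functions are smooth, and the inequality $\Delta\rho\le-\H(h-\rho)=-\H(r)$ becomes $\Delta r\ge\H(r)$; together with $\Delta r\le\H(r)$ this yields $\Delta r=\H(r)$. Because the model bound is attained, $r$ is smooth, $p$ has no cut or focal points before distance $h$, and the equality case of the Hessian comparison gives $\hess r=(\ck_k(r)/\sk_k(r))(g-dr\otimes dr)$. Consequently, in geodesic polar coordinates centred at $p$ the metric is $dr^2+\sk_k(r)^2 g_{\SS^{n-1}}$, the space form metric; since $r=h-\rho$ ranges over $[0,h]$ and $\partial M=\{r=h\}$, the manifold $M=\{r\le h\}$ is isometric to $D_{\SS^n_k}(h)$.

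I expect the main obstacle to be the maximum-principle step, since $\rho$ and $r$ are merely Lipschitz and the comparisons hold only in the barrier (support) sense, with possible cut-locus degeneracies, so one cannot differentiate $f$ naively. The rigorous argument requires, at each point where $f=h$, smooth upper support functions for both $\rho$ and $r$ whose sum is a superharmonic upper barrier for $f$ touching its minimum, which is the Eschenburg--Heintze device underlying the modern proof of the splitting theorem; one must also check that the interior points of $\gamma$ are genuine interior points of $M$ lying off the cut locus of $p$, so that the comparison for $r$ is available there and the rigidity can be propagated from a neighbourhood of $\gamma$ to all of $M$ by connectedness.
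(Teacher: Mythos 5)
Your proposal is correct and follows essentially the same route as the paper: the function $f=\rho+r$ (the paper's $F=r+\rho-h$), the two Laplacian comparisons combined via monotonicity of $\H$, the Cheeger--Gromoll minimum-principle argument, and the extraction of $\hess r=\frac{\ck_k(r)}{\sk_k(r)}(g-dr\otimes dr)$ from equality in the comparison (which the paper makes explicit via the Bochner formula and Cauchy--Schwarz) leading to the warped-product form of the metric. Your closing remarks on the barrier-sense/maximum-principle technicalities address a point the paper's proof passes over silently, but the underlying argument is the same.
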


It is well known that results related to Riemannian curvatures are sometimes also hold for \kahler\ manifold when hypothesis are suitably phrased in terms of bi-sectional or holomorphic sectional curvature. In fact the idea of the proof of \autoref{thm:k} can be used to prove the following estimate for \kahler\ manifold. It suits our purposes well in this note to avoid complex vector spaces. In fact we will treat \kahler\ manifold as a Riemannian manifold with metric $g$ admitting a parallel skew-symmetric linear transformation $J$ on the tangent bundle such that $J^2=-I$, where $I$ is the identity transformation of $TM$. Let $R$ be the Riemannian curvature tensor of $g$ with the convention
$$
R(X, Y, Z, W)=\langle -\nabla_X\nabla_Y Z +\nabla_Y\nabla_X Z +\nabla_{[X, Y]}Z, W\rangle.
$$
Hence the sectional curvature of a plane $\sigma$ is $R(X, Y, X, Y)$, where $X, Y$ are an orthonormal basis of $\sigma$. Let $\sigma_1$ and $\sigma_2$ are two planes in $T_xM$ each invariant under $J$. Let $X$ and $Y$ be unit vectors in $\sigma_1$ and $\sigma_2$ respectively. Then the holomorphic bisectional curvature of $\sigma_1$ and $\sigma_2$ is defined by:
$$
\bisec(\sigma_1, \sigma_2)=R(X, JX, Y, JY).
$$
By the Bianchi identity we have
$$
\bisec(\sigma_1, \sigma_2)=R(X, Y, X, Y)+R(X, JY, X, JY)
$$
Notice that our definition of bi-sectional curvature differs by factor $2$ to the one gave in \cite{LW2005}. By writing the inequality $\bisec \ge 2k$ we mean
$$
R(X, Y, X, Y)+R(X, JY, X, JY)\ge 2k(|X|^2|Y|^2+\langle X, Y\rangle^2 +\langle X, JY\rangle ^2),
$$
for any $X, Y\in T_xM$ and any $x\in M$. Note that the complex projective space $\CP^n$ with the Fubini-study metric is normalized to have constant holomorphic sectional curvature $4$. The condition $\bisec \ge 2k$ are stronger than the condition $\Ric \ge (2n+2)k$, however the conclusions one can get are also stronger. Define
$$
\KH(r)=\ck_{4k}(r)/\sk_{4k}(r)+(2n-2)\ck_{k}(r)/\sk_{k}(r),
$$
which is the mean curvature of the geodesic sphere of radius $r$ in simply connected complex space form with constant holomorphic sectional curvature $4k$. One easily verify that $\KH(r)$ is monotone decreasing in it's natural domain of definition. We have the following \kahler\ version of \autoref{thm:k}
\begin{theorem}\label{thm:kahler}
Let $M^n$ be a \kahler\ manifold of complex dimension $n$ with bisectional curvature $\bisec \ge 2k$, for $k\in \RR$. If $\partial M$ is mean convex with mean curvature $H\ge \KH(h)$, if $k=1$ we assume $0<h<\pi/2$. Let $\rho(x)=\dis(x, \partial M)$ be the distance to the boundary. Then for all $x\in M$,
$$
\rho(x) \le h.
$$
\end{theorem}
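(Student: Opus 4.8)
The plan is to adapt the second-variation (index form) argument for focal points of the boundary, in the spirit of Heintze--Karcher and of \autoref{thm:k}, but to organize it around the complex structure $J$ so that the holomorphic sectional curvature and the bisectional curvature enter through two distinguished kinds of normal directions. Fix $x\in M\setminus\partial M$ and let $\gamma\colon[0,\ell]\to M$ be a unit-speed minimizing geodesic with $\gamma(0)=p\in\partial M$, $\gamma'(0)$ the inward unit normal, and $\ell=\rho(x)=\dist(x,\partial M)$; by the first variation $\gamma$ meets $\partial M$ orthogonally, and since it realizes the distance to $\partial M$ there is no focal point of $\partial M$ along $\gamma|_{(0,\ell)}$, so the focal index form on $[0,s]$ is positive definite for every $s<\ell$. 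Arguing by contradiction, I would assume $\ell>h$ and produce admissible fields on $[0,h]$ whose total index is nonpositive, contradicting positive definiteness.

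Because $J$ is parallel and $\gamma$ is a geodesic, $J\gamma'(t)$ is a parallel unit field along $\gamma$ orthogonal to $\gamma'$, and the orthogonal complement of $\mathrm{span}\{\gamma',J\gamma'\}$ is a parallel, $J$-invariant subbundle of rank $2n-2$. I would pick a parallel orthonormal frame $E_1=J\gamma',E_2,\dots,E_{2n-1}$ of $(\gamma')^{\perp}$ adapted to the splitting into $J$-invariant planes, so that the remaining vectors come in pairs $\{E_{2i},\,E_{2i+1}=JE_{2i}\}$, $i=1,\dots,n-1$. Set $f_1(t)=\sk_{4k}(h-t)/\sk_{4k}(h)$ and $f(t)=\sk_{k}(h-t)/\sk_{k}(h)$ on $[0,h]$, extended by zero to $[h,\ell]$; these are the radial Jacobi factors of the geodesic sphere of radius $h$ in the complex space form, well defined precisely because $h$ lies in the natural domain (for $k=1$, $0<h<\pi/2$ forces $\sk_{4k}(h)\neq 0$). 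The test fields are $V_1=f_1E_1$ and $V_{2i}=fE_{2i}$, $V_{2i+1}=fE_{2i+1}$, all vanishing at $t=h$ (hence at $\ell$) and equal to the frame vectors at $t=0$.

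The key computation is to integrate each index form by parts using $f_1''+4kf_1=0$ and $f''+kf=0$. Since every $f_j$ vanishes at $h$, the interior boundary term drops and the kinetic parts telescope to $-\sum_j f_j'(0)=\ck_{4k}(h)/\sk_{4k}(h)+(2n-2)\ck_{k}(h)/\sk_{k}(h)=\KH(h)$, while the second-fundamental-form terms, with the standard sign convention normalized so that the model sphere is the equality case, sum to $\tr\II=H$. What remains is the curvature defect $\sum_j\int_0^h(\kappa_j-R(E_j,\gamma',E_j,\gamma'))f_j^2\,dt$, with $\kappa_1=4k$ and $\kappa_{2i}=\kappa_{2i+1}=k$. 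This is the crux, and the main obstacle: a naive trace/Cauchy--Schwarz estimate sees only $\Ric\ge(2n+2)k$ and returns the weaker, non-sharp bound of \autoref{thm:k}. Instead I would use that the $E_1=J\gamma'$ term is controlled by the holomorphic sectional curvature, $R(J\gamma',\gamma',J\gamma',\gamma')\ge 4k$, while for each $J$-invariant pair the Bianchi identity $\bisec=R(X,Y,X,Y)+R(X,JY,X,JY)$ gives $R(E_{2i},\gamma',E_{2i},\gamma')+R(E_{2i+1},\gamma',E_{2i+1},\gamma')\ge 2k$. Because $f_{2i}=f_{2i+1}$, grouping the two members of each pair lets me apply the bisectional bound to their common nonnegative coefficient $f^2$, so every grouped term of the curvature defect is $\le 0$. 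Note that a matrix Riccati comparison is deliberately avoided here, since the hypothesis controls only the sum of each pair of radial sectional curvatures, not the full curvature operator.

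Combining the three contributions yields $\sum_j I(V_j,V_j)\le \KH(h)-H\le 0$, where the last step uses the hypothesis $H\ge\KH(h)$ directly. On the other hand, if $\ell>h$ then $[0,h]\subset[0,\ell)$ contains no focal point of $\partial M$, so the index form on $[0,h]$ is positive definite on the nonzero admissible fields $V_j$, forcing $\sum_j I(V_j,V_j)>0$, a contradiction. Hence $\ell=\rho(x)\le h$, and since $x$ was arbitrary this proves the theorem. The only points needing care are the sign normalization of the boundary term (fixed by requiring the radius-$h$ geodesic sphere in the complex space form, where $H=\KH(h)$, to be the equality case) and the admissibility of the piecewise-smooth fields, whose corner at $t=h$ contributes nothing because each $f_j(h)=0$.
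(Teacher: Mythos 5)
Your proposal is correct, and its core mechanism is the same as the paper's: a second--variation/index--form argument along the minimizing geodesic with a parallel $J$-adapted frame, in which the $J\gamma'$ direction carries the $\sk_{4k}$ profile, each $J$-invariant pair carries a common $\sk_k$ profile, and the curvature terms are controlled exactly as you do --- by the holomorphic bound $K(\gamma',J\gamma')\ge 4k$ for the distinguished direction and by the paired bisectional bound $K(\gamma',V)+K(\gamma',JV)\ge 2k$ for each pair. The difference is organizational rather than conceptual, but it is a real one. The paper argues directly: it takes test fields $\sk_k(s)/\sk_k(\ell)$ and $\sk_{4k}(s)/\sk_{4k}(\ell)$ on $[0,\ell]$, vanishing at the interior endpoint, concludes $\KH(\ell)\ge H_p\ge \KH(h)$ from nonnegativity of the second variation of a minimizing geodesic, and finishes by the monotonicity of $\KH$. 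You instead pin the model profiles at $h$, assume $\ell>h$, and contradict the positive definiteness of the focal index form on $[0,h]$ (essentially \autoref{prop:Minimality}, which the paper invokes only later, for the Hessian comparison in \autoref{thm:kahlerHessian}). Your contrapositive version buys a genuine technical simplification in the positive curvature case: since your test functions depend only on $h$ and never on $\ell$, you do not need $\sk_{4k}(\ell)\neq 0$, whereas the paper must quote the Li--Wang diameter bound $\diam(M)<\pi/2$ for $k=1$ to make its test fields well defined; the price is that you rely on the slightly heavier statement that the index form is positive definite before the first focal point, rather than on plain nonnegativity of the second variation. Both closings are standard, and your handling of the delicate points (orthogonality at the boundary, absence of focal points on $[0,h]\subset[0,\ell)$, vanishing of the corner contribution at $t=h$, and the sign of the $\II$ term fixed by the model equality case) is accurate.
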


In fact if we put a stronger assumption on the convexity of $\partial M$, then \autoref{thm:kahler} can also be viewed as a consequence of the following Laplacian comparison theorem for $\rho(x)$:
\begin{theorem}\label{thm:kahlercomp}
Let $M^n$ be a \kahler\ manifold of complex dimension $n$ with bisectional curvature $\bisec \ge 2k$, for $k\in \RR$. If the second fundamental form of $\partial M$ with respect to inner normal direction $\nu$ satisfy $\II(J\nu, J\nu)\ge \ck_{4k}(h)/\sk_{4k}(h)$ and $\II(V, V) + \II(JV, JV) \ge 2 \ck_{k}(h)/\sk_{k}(h)$ for any $V\perp J\nu$, if $k=1$ we assume $0<h<\pi/2$. Let $\rho(x)=\dis(x, \partial M)$ be the distance to the boundary. Then
\begin{equation*}
\Delta\rho(x)\le - \KH(h-\rho(x)) \le -\KH(h).
\end{equation*}
\end{theorem}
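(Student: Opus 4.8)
The plan is to run a pointwise Riccati comparison for the shape operator of the level sets of $\rho$ along a minimizing geodesic leaving $\partial M$ normally, but to organize the trace so that the \kahler\ structure is used directionwise. Fix a point $x$ at which $\rho$ is smooth (i.e. outside the cut locus of $\partial M$) and let $\gamma\colon[0,\ell]\to M$, with $\ell=\rho(x)$, be the unit-speed minimizing geodesic with $\gamma(0)=p\in\partial M$, $\gamma'(0)=\nu$ the inner normal, and $\gamma(\ell)=x$. Since $x$ lies before the first focal point of $\partial M$, the shape operator $S(t)=\hess\rho|_{\gamma'(t)^\perp}$ is a smooth self-adjoint field on $[0,\ell]$ obeying the Riccati equation $S'+S^2+R_{\gamma'}=0$, where $R_{\gamma'}(E)=R(E,\gamma')\gamma'$ so that $\langle R_{\gamma'}E,E\rangle=R(E,\gamma',E,\gamma')$ is the sectional curvature of the plane spanned by $E$ and $\gamma'$. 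The initial value is fixed by the second fundamental form, $\langle S(0)E,E\rangle=-\II(E,E)$ for $E\in T_p\partial M$ (the sign being the one for which the model disk has $\langle S(0)E,E\rangle=-\ck_{4k}(h)/\sk_{4k}(h)<0$ in complex tangent directions), and $\Delta\rho(\gamma(t))=\tr S(t)$.

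Because $J$ is parallel and $\gamma'$ is parallel along a geodesic, the fields $J\gamma'$ and a complex orthonormal frame $V_1,JV_1,\dots,V_{n-1},JV_{n-1}$ of the parallel, $J$-invariant complement $W=\langle\gamma',J\gamma'\rangle^\perp$ are all parallel and give an orthonormal basis of $\gamma'^\perp$; note $W$ is exactly the set of $V\in T_p\partial M$ with $V\perp J\nu$ appearing in the hypothesis. I would first convert the curvature assumption into the two sums that actually feed the comparison: applying $\bisec\ge 2k$ with $X=Y=\gamma'$ gives the holomorphic sectional curvature bound $R(\gamma',J\gamma',\gamma',J\gamma')\ge 4k$, while applying it to the $J$-invariant planes $\langle\gamma',J\gamma'\rangle$ and $\langle V_i,JV_i\rangle$ gives $R(V_i,\gamma',V_i,\gamma')+R(JV_i,\gamma',JV_i,\gamma')\ge 2k$. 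These are precisely the curvature sums responsible for the two terms of $\KH$.

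Next I would pair the Riccati equation against this frame. Writing $u(t)=\langle S J\gamma',J\gamma'\rangle$ and using $|SJ\gamma'|^2\ge u^2$ together with the holomorphic bound yields $u'\le -u^2-4k$, with $u(0)=-\II(J\nu,J\nu)\le -\ck_{4k}(h)/\sk_{4k}(h)$. Writing $p_i(t)=\langle SV_i,V_i\rangle+\langle SJV_i,JV_i\rangle$ and using $|SV_i|^2+|SJV_i|^2\ge \tfrac12 p_i^2$ together with the paired bisectional bound yields $p_i'\le -\tfrac12 p_i^2-2k$, with $p_i(0)\le -2\ck_k(h)/\sk_k(h)$. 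A direct check shows that $-\ck_{4k}(h-t)/\sk_{4k}(h-t)$ and $-2\ck_k(h-t)/\sk_k(h-t)$ solve the matching model Riccati equations with equality and agree with the initial data, so the scalar comparison lemma (a one-line Gr\"onwall estimate on the difference $w$, using $w'+(\,\cdot\,)w\le0$ and $w(0)\le0$) gives $u(t)\le -\ck_{4k}(h-t)/\sk_{4k}(h-t)$ and $p_i(t)\le -2\ck_k(h-t)/\sk_k(h-t)$ on $[0,\ell]$. Summing over the frame, $\Delta\rho(\gamma(t))=u(t)+\sum_{i=1}^{n-1}p_i(t)\le -\KH(h-t)$; evaluating at $t=\ell=\rho(x)$ gives the first inequality, and the second follows since $\rho(x)\ge0$ and $\KH$ is monotone decreasing.

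The step I expect to be the main obstacle is exactly this \kahler\ bookkeeping: passing from the single hypothesis $\bisec\ge 2k$ to the directional curvature sums, and securing the algebraic inequalities $|SJ\gamma'|^2\ge u^2$ and $|SV_i|^2+|SJV_i|^2\ge\tfrac12 p_i^2$. These are what allow a trace comparison to succeed where a naive matrix Riccati comparison would demand full sectional bounds that the hypothesis does not supply, and they are the structural reason the estimate splits into a $4k$ part along $J\gamma'$ and a $2k$ part on the remaining complex directions. I would also record the routine finiteness remark that, $x$ lying before the first focal point, $u$ and the $p_i$ stay finite on $[0,\ell]$; this both legitimizes the comparison and, since the model solutions blow up only at $t=h$, forces $\ell\le h$, thereby recovering \autoref{thm:kahler}. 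Finally, for a general (possibly non-smooth) point one reduces to this case by the standard support-function/barrier argument at the cut locus of $\partial M$.
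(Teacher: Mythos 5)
Your argument is correct, but it runs on a different engine than the paper's. The paper proves this statement through \autoref{thm:kahlerHessian}, an index-form argument: it solves the scalar equations $f''+4kf=0$ and $g''+kg=0$ with initial data $f'(0)=-\ck_{4k}(h)/\sk_{4k}(h)$, $g'(0)=-\ck_k(h)/\sk_k(h)$, builds test fields $\tilde f E_{2n-1}$, $\tilde g E_i$ along the same parallel $J$-adapted frame you use, and invokes the basic inequality of Bishop--Crittenden (\autoref{prop:Minimality}: the $\partial M$-Jacobi field minimizes the index form among fields with the same endpoint value) to get $\hess\rho(E_i(\ell),E_i(\ell))=I_\gamma(J_i,J_i)\le I_\gamma(V_i,V_i)$, which is then evaluated by integration by parts, yielding the closed forms $f'(\ell)/f(\ell)=-\ck_{4k}(h-\ell)/\sk_{4k}(h-\ell)$ and $g'(\ell)/g(\ell)=-\ck_k(h-\ell)/\sk_k(h-\ell)$. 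You instead derive the matrix Riccati equation for $S=\hess\rho|_{\gamma'^\perp}$ along the normal geodesic and close it directionwise via the Cauchy--Schwarz inequalities $|SJ\gamma'|^2\ge u^2$ and $|SV_i|^2+|SJV_i|^2\ge\tfrac12 p_i^2$, followed by scalar Riccati--Gr\"onwall comparison against the same model functions; this is the classical dual of the index-form method. The \kahler\ bookkeeping is identical in both proofs (same frame splitting into the $J\gamma'$ direction and complex pairs, same conversion of $\bisec\ge 2k$ into $K(\gamma',J\gamma')\ge 4k$ and $K(\gamma',V_i)+K(\gamma',JV_i)\ge 2k$, which the paper uses implicitly inside \eqref{eq:f} and \eqref{eq:g}). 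What your route buys: it is self-contained at the ODE level, needing only smoothness of $\rho$ before the cut locus rather than focal-point/index machinery; it gives the directional Hessian bounds of \autoref{thm:kahlerHessian} along the entire geodesic, not only at the endpoint; and it recovers \autoref{thm:kahler} ($\rho\le h$) for free from the blow-up of the model solutions at $t=h$, which the paper establishes by a separate second-variation argument (and in fact cites inside the proof of \autoref{thm:kahlerHessian}, so your version removes that dependency). What the paper's route buys: the index-form minimality handles the endpoint boundary-value problem directly and needs no trace Cauchy--Schwarz step, since the index form of a test field proportional to a parallel field only sees the relevant diagonal curvature entries. Both proofs dispatch cut-locus points by the same standard barrier remark, so your final reduction matches the paper's.
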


The Laplace comparison for distance function to a point with arbitrary holomorphic bisectional curvature lower bound is given by Li-Wang \cite{LW2005}.
\begin{theorem}[Li-Wang's Laplace Comparison, \cite{LW2005}]
Let $M^n$ be a \kahler\ manifold of complex dimension $n$ with holomorphic bisectional curvature $\bisec \ge 2k$, for $k\in \RR$. Let $r(x)=\dist(x, o)$, where $o\in M$ is a fixed point in $M$. Then
\begin{equation*}
\Delta r(x)\le \KH(r(x)).
\end{equation*}
\end{theorem}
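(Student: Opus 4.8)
The plan is to reconstruct the argument through the second variation of arc length, choosing the comparison (test) fields so that they respect the complex structure $J$. Fix $x\neq o$ outside the cut locus of $o$, so that $r$ is smooth near $x$, and let $\gamma\colon[0,r(x)]\to M$ be the unit-speed minimizing geodesic from $o$ to $x$, with $e_0:=\gamma'$. Since $\nabla J=0$, the field $e_1:=Je_0$ is parallel along $\gamma$ and orthogonal to $e_0$; I extend $\{e_0,e_1\}$ to a parallel orthonormal frame $e_0,e_1,e_2,\dots,e_{2n-1}$ in which $W:=\operatorname{span}\{e_2,\dots,e_{2n-1}\}$ is $J$-invariant, organized into pairs $e_{2j+1}=Je_{2j}$. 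Recall that $\Delta r(x)=\sum_{i=1}^{2n-1}\hess r(e_i,e_i)=\sum_i I(J_i,J_i)$, where $J_i$ is the Jacobi field along $\gamma$ with $J_i(0)=0$ and $J_i(r(x))=e_i$, and $I$ is the index form. Since $\gamma$ is minimizing there are no interior conjugate points, so the index lemma gives $I(J_i,J_i)\le I(V_i,V_i)$ for any piecewise-smooth $V_i$ sharing the same boundary values.

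The key step is the choice of the $V_i$. For the holomorphic direction I set $V_1(t)=\big(\sk_{4k}(t)/\sk_{4k}(r(x))\big)\,e_1(t)$, and for $i\ge 2$ I set $V_i(t)=\big(\sk_k(t)/\sk_k(r(x))\big)\,e_i(t)$. Writing $I(V,V)=\int_0^{r(x)}\big(|\nabla_{e_0}V|^2-R(V,e_0,V,e_0)\big)\,dt$ and using that the frame is parallel, each index form reduces to a scalar integral with curvature coefficient $R(e_i,e_0,e_i,e_0)$. Here the bisectional hypothesis enters in two ways. Taking $X=Y=e_0$ in $\bisec\ge 2k$ gives $R(e_0,Je_0,e_0,Je_0)\ge 4k$, i.e. the holomorphic sectional curvature along $e_1$ is at least $4k$. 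Taking $X=e_0$ and $Y=e_{2j}$, and using that $e_{2j}$ and $e_{2j+1}=Je_{2j}$ are orthogonal to both $e_0$ and $Je_0$, annihilates the correction terms $\langle X,Y\rangle$ and $\langle X,JY\rangle$ on the right, yielding $R(e_0,e_{2j},e_0,e_{2j})+R(e_0,e_{2j+1},e_0,e_{2j+1})\ge 2k$ for each pair; summing over the $n-1$ pairs gives $\sum_{i\ge 2}R(e_i,e_0,e_i,e_0)\ge(2n-2)k$.

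It then remains to evaluate the integrals. Because $\sk_{4k}$ and $\sk_k$ solve $\sk_{4k}''+4k\,\sk_{4k}=0$ and $\sk_k''+k\,\sk_k=0$ with value $0$ at the origin, the functions $f_1=\sk_{4k}/\sk_{4k}(r(x))$ and $f=\sk_k/\sk_k(r(x))$ satisfy $f_1''+4kf_1=0$, $f''+kf=0$, vanish at $0$, and equal $1$ at $r(x)$. Integration by parts gives $\int_0^{r(x)}\big((f_1')^2-4kf_1^2\big)\,dt=f_1'(r(x))=\ck_{4k}(r(x))/\sk_{4k}(r(x))$ and likewise $\int_0^{r(x)}\big((f')^2-kf^2\big)\,dt=\ck_k(r(x))/\sk_k(r(x))$. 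Combining these with the two curvature bounds (and $f,f_1\ge 0$, which lets the inequalities survive multiplication by $f^2$) yields $I(V_1,V_1)\le\ck_{4k}(r(x))/\sk_{4k}(r(x))$ and $\sum_{i\ge 2}I(V_i,V_i)\le(2n-2)\,\ck_k(r(x))/\sk_k(r(x))$, so that $\Delta r(x)\le\KH(r(x))$.

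The main obstacle is the regularity of $r$: the identity $\Delta r=\sum_iI(J_i,J_i)$ and the index lemma are valid only away from $o$ and the cut locus. At cut points the estimate should be upgraded to the barrier (or distributional) sense by the standard device of pushing the base point slightly forward along $\gamma$, which is routine; the genuinely \kahler\ input is confined entirely to the $J$-adapted frame decomposition and the two applications of the bisectional bound above.
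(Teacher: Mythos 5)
This theorem is quoted in the paper from \cite{LW2005} without proof, so there is no internal argument to compare against; judged on its own, your proof is correct. It is also essentially the same technique the paper uses for its boundary analogues (\autoref{thm:kahler} and \autoref{thm:kahlerHessian}): a $J$-adapted parallel frame along a minimizing geodesic, the test field $\bigl(\sk_{4k}(t)/\sk_{4k}(r)\bigr)J\gamma'$ in the holomorphic direction and $\bigl(\sk_k(t)/\sk_k(r)\bigr)e_i$ on the remaining $J$-pairs, the bisectional bound applied once with $X=Y=\gamma'$ (holomorphic sectional curvature $\ge 4k$) and once per pair $\{e_{2j},Je_{2j}\}$ (paired bound $\ge 2k$), followed by integration by parts and the index lemma. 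Your two points of care --- applying the paired curvature bound only to sums whose terms share the same coefficient function $f^2\ge 0$, and upgrading to the barrier sense at cut points via Calabi's trick --- are precisely the steps that need attention, and both are handled correctly.
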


We also note that a complex Hessian comparison theorem for the Busemann function on complete \kahler\ manifold with non-negative holomorphic bisectional curvature was proved by Greene-Wu \cite{GW1978}. Cao-Ni \cite{CN2005} proved the complex Hessian comparison theorem for the distance function on \kahler\ manifold with non-negative holomorphic bisectional curvature. Our techniques used in the proofs of \autoref{thm:kahler} and \autoref{thm:kahlercomp} are inspired by \cite{GW1978} and \cite{CS2005}. The Laplacian comparison of distance function to the boundary is a natural complement of the comparison theorems given in \cite{LW2005}. As an application of the Laplace comparison, we estimate the lower bound of the first eigenvalue of the Laplacian for \kahler\ manifold with boundary using the idea of Song-Ying Li and Xiaodong Wang \cite{LW2012}. They gave an estimation in the similar settings with a fixed second fundamental form $2n$.

\begin{theorem}\label{thm:eigenComp}
Assume $M^n$ satisfies the condition in \autoref{thm:kahlercomp} with $\KH(h)\ge 0$ and also assume $M$ is compact. Denote the first Dirichlet eigenvalue of the Laplacian by $\lambda_1$. Then
$$
\lambda_1\ge \Big(\frac{\KH(h)}{2}\Big)^2.\\
$$
\end{theorem}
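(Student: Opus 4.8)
The plan is to combine the Laplacian comparison of \autoref{thm:kahlercomp} with a Barta-type test-function argument, reducing the eigenvalue bound to a one-dimensional ODE in the variable $\rho$. Write $c=\KH(h)$, which is nonnegative by hypothesis, so that \autoref{thm:kahlercomp} gives $\Delta\rho\le-\KH(h)=-c$ (in the barrier/distributional sense across the cut locus of $\partial M$). I would look for a test function of the form $f=\psi(\rho)$, with $\psi(0)=0$ so that $f$ vanishes on $\partial M$, and with $\psi\ge0$, $\psi'\ge0$, chosen so that $\Delta f\le-(c/2)^2f$. Since $|\nabla\rho|=1$ where $\rho$ is smooth, the chain rule gives $\Delta f=\psi''(\rho)+\psi'(\rho)\,\Delta\rho$, and using $\psi'\ge0$ together with $\Delta\rho\le-c$ yields $\Delta f\le\psi''-c\psi'$. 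Thus it suffices that $\psi$ satisfy $\psi''-c\psi'+(c/2)^2\psi\le0$.

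The natural choice is the solution of the associated ODE at its critical, double-root value: set $\psi(t)=t\,e^{ct/2}$, so that $\psi''-c\psi'+(c/2)^2\psi=0$ identically. One checks $\psi(0)=0$, $\psi(t)>0$ for $t>0$, and $\psi'(t)=(1+\tfrac{c}{2}t)\,e^{ct/2}\ge0$ for $t\ge0$ and $c\ge0$; since $0\le\rho\le h$ by \autoref{thm:kahler}, $\psi$ is smooth on the relevant range. Hence $f=\psi(\rho)$ satisfies $\Delta f\le-(c/2)^2 f$ weakly, with $f>0$ in $\interior M$ and $f=0$ on $\partial M$.

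To extract the eigenvalue bound I would let $u$ be a first Dirichlet eigenfunction, normalized so that $u>0$ in $\interior M$ and $u=0$ on $\partial M$, with $\Delta u=-\lambda_1 u$. Because both $u$ and $f$ vanish on $\partial M$, the boundary terms in Green's identity cancel, so $\int_M u\,\Delta f\,dV=\int_M f\,\Delta u\,dV=-\lambda_1\int_M uf\,dV$. On the other hand $u\ge0$ and $\Delta f\le-(c/2)^2 f$ give $\int_M u\,\Delta f\,dV\le-(c/2)^2\int_M uf\,dV$. Comparing the two expressions and dividing by $\int_M uf\,dV>0$ yields $\lambda_1\ge(c/2)^2=(\KH(h)/2)^2$, as claimed.

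The main obstacle is the low regularity of $\rho$: it is only Lipschitz, and its distributional Laplacian carries a singular part supported on the cut locus of $\partial M$. I expect to handle this exactly as in the comparison theorems, noting that this singular part is nonpositive, i.e. it contributes in the favorable direction, so that both the chain-rule inequality for $\Delta\psi(\rho)$ (which uses $\psi'\ge0$ to preserve the sign) and the weak form of Green's identity above remain valid; alternatively one can replace $\rho$ by Calabi-type smooth lower barriers and pass to the limit. Once this weak formulation is in place, the remaining content is precisely the one-dimensional ODE identity satisfied by $\psi$.
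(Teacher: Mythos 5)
Your proposal is correct, but it reaches the bound by a genuinely different mechanism than the paper. The paper (following the Li--Wang idea it cites) argues pointwise via the maximum principle: it forms $F=fe^{g}$ with $f$ the positive first eigenfunction and $g=-c\rho$, notes that $F$ attains an interior maximum at some $p_0$, and from $\nabla F(p_0)=0$, $\Delta F(p_0)\le 0$ deduces $\lambda_1\ge(\Delta g-|\nabla g|^2)|_{p_0}\ge c\,(\KH(h)-c)$, finally optimizing over the free parameter $c$ to get $(\KH(h)/2)^2$. You instead run a Barta-type integral argument: you pick the single test function $\psi(\rho)=\rho\,e^{c\rho/2}$, the double-root solution of $\psi''-c\psi'+(c/2)^2\psi=0$, so that the comparison $\Delta\rho\le-\KH(h)$ yields $\Delta\psi(\rho)\le-(c/2)^2\psi(\rho)$, and then pair against the eigenfunction via Green's identity; the optimization over $c$ is baked into the choice of the critical ODE solution rather than performed at the end. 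The trade-off is in the regularity bookkeeping. The paper's pointwise route needs the Laplace comparison only at the single point $p_0$ and only in the barrier sense, and its test function $e^{-c\rho}$ need not vanish on $\partial M$ (the Dirichlet condition is carried entirely by the eigenfunction), so the nonsmoothness of $\rho$ across the cut locus is handled with minimal machinery. Your route requires the comparison to hold distributionally across the cut locus and Green's identity for a Lipschitz function whose Laplacian is a measure; your observation that the singular part of $\Delta\rho$ on the cut locus is nonpositive, hence favorable once $\psi'\ge 0$, is exactly the right point and is standard, but it is heavier machinery than the paper invokes. In exchange, your argument is the classical variational/Barta scheme, arguably more transparent and more readily generalized (e.g., it immediately gives $\lambda_1\ge\sup_{\phi}\inf_M(-\Delta\phi/\phi)$ over admissible test functions), whereas the paper's argument is tied to the maximum point of one specific auxiliary function.
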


It is my pleasure to thank Werner Ballmann for useful comments.

\section{Proof of the \autoref{thm:k} and \autoref{thm:rigidity}}
In this note, all proofs are given in an uniform flavor, i.e. independent of the sign of $k$. Hence let's recall some definitions.
\begin{definition}\label{def:sincos}
Given a real constant $k$, we let $\sk_{k}$ denote the solution to the ordinary differential equation
\begin{equation}\label{eq:kJacobi}
\left\{
\begin{aligned}
\phi''+k\phi&=0,\\
\phi(0)=0,\quad &\phi'(0)=1
\end{aligned} \right.
\end{equation}
Setting $\ck_{k}(t)=\sk'_{k}(t)$, we clearly get $\ck'_{k}(t)=-k\sk_{k}(t)$ and $\ck_k$ satisfies $\ck_k''+k\ck_k=0$ , with initial condition $\ck_{k}(0)=1, \ck'_{k}(0)=0$.
\end{definition}

The explicit expressions are given by
\begin{equation} \label{eq:sk}
\sk_{k}(t)= \left\{ \begin{aligned}
&\frac{\sinh{(\sqrt{|k|}t)}}{\sqrt{|k|}} \quad&{\rm for }\quad k<0, \\
&t \quad \quad&{\rm for }\quad k=0,\\
&\frac{\sin{(\sqrt{k}t)}}{\sqrt{k}}\quad &{\rm for } \quad k>0,\\
\end{aligned} \right.
\end{equation}
\begin{equation} \label{eq:ck}
\ck_{k}(t)= \left\{ \begin{aligned}
&\cosh{(\sqrt{|k|}t)} \quad&{\rm for }\quad k<0 \\
&1 \quad \quad&{\rm for }\quad k=0\\
&\cos{(\sqrt{k}t)}\quad &{\rm for } \quad k>0\\
\end{aligned} \right.
\end{equation}
We sum up several basic formulas of $\sk_k$ and $\ck_k$, the proof is straightforward calculations.
\begin{prop}
\begin{equation}\label{eq:cksksum}
\begin{aligned}
\sk_k&'=\ck_k;\\
\ck_k'&=-k\sk_k;\\
\ck_k(A+B)&=\ck_k(A)\ck_k(B)-k\sk_k(A)\sk_k(B);\\
\sk_k(A+B)&=\sk_k(A)\ck_k(B)+\ck_k(A)\sk_k(B).
\end{aligned}
\end{equation}
\end{prop}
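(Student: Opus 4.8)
The first two identities require no work: $\sk_k' = \ck_k$ is simply the definition of $\ck_k$ given in \autoref{def:sincos}, while $\ck_k' = -k\sk_k$ follows by differentiating once more and substituting the defining ODE \eqref{eq:kJacobi}, since $\ck_k' = \sk_k'' = -k\sk_k$. The real content is in the two addition formulas, and my plan is to derive both from the uniqueness theorem for linear second-order initial value problems; this keeps the argument uniform in the sign of $k$ and avoids splitting into the three cases of \eqref{eq:sk}--\eqref{eq:ck}.

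For the sum formula for $\sk_k$, I would fix $B \in \RR$ and view both sides as functions of $A$. Setting $f(A) = \sk_k(A+B)$, the chain rule together with the ODE give $f'' + kf = 0$, with initial data $f(0) = \sk_k(B)$ and $f'(0) = \ck_k(B)$. Setting $g(A) = \sk_k(A)\ck_k(B) + \ck_k(A)\sk_k(B)$, I note that $g$ is an $A$-independent linear combination of the two solutions $\sk_k$ and $\ck_k$ of the same equation, so $g'' + kg = 0$ as well. Using $\sk_k(0) = 0$, $\ck_k(0) = 1$ and the derivative rules just established, a one-line check gives $g(0) = \sk_k(B)$ and $g'(0) = \ck_k(B)$. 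Since $f$ and $g$ solve the same ODE with the same initial data, uniqueness forces $f \equiv g$, which is exactly the desired identity.

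The sum formula for $\ck_k$ follows identically, now with $f(A) = \ck_k(A+B)$ and $g(A) = \ck_k(A)\ck_k(B) - k\sk_k(A)\sk_k(B)$; the initial data to match are $f(0) = g(0) = \ck_k(B)$ and $f'(0) = g'(0) = -k\sk_k(B)$, the factor $-k$ arising from $\ck_k' = -k\sk_k$. Equivalently, one may simply differentiate the $\sk_k$ identity in $A$ and apply the two derivative rules. I expect no genuine obstacle in this proposition; the only point needing care is bookkeeping the initial conditions so that the factor $-k$ lands in the correct place, and the explicit expressions \eqref{eq:sk}--\eqref{eq:ck} provide an independent sanity check through the classical addition theorems for $\sin$, $\cos$, $\sinh$, and $\cosh$.
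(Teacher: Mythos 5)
Your argument is correct, but it differs from the paper, which offers no proof at all: the proposition is introduced with the remark that ``the proof is straightforward calculations,'' implicitly meaning case-by-case verification through the explicit expressions \eqref{eq:sk} and \eqref{eq:ck} and the classical addition theorems for $\sin$, $\cos$, $\sinh$, $\cosh$ (with the $k=0$ case being trivial polynomial identities). Your route instead derives the addition formulas from uniqueness for the initial value problem $\phi''+k\phi=0$: fixing $B$ and checking that both sides, as functions of $A$, solve the same equation with matching data at $A=0$. All the initial-condition bookkeeping in your write-up is accurate, including the factor $-k$ in $\ck_k'(0)$, and the observation that differentiating the $\sk_k$ identity in $A$ yields the $\ck_k$ identity is a legitimate shortcut. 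What your approach buys is a single case-free argument, which is actually more in the spirit of the paper than its own implicit proof --- the author states at the start of Section 1 that ``all proofs are given in an uniform flavor, i.e.\ independent of the sign of $k$,'' and your ODE-uniqueness derivation honors that principle where the explicit-formula check does not; what the paper's tacit route buys is brevity, since for a reader who already knows the trigonometric and hyperbolic addition theorems the verification is a one-line substitution in each case.
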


\begin{proof}[Proof of the \autoref{thm:k}]
For any fixed $\ell >0$ (if $k\ge 0$ we also assume $\ell<\pi/\sqrt{k}$), we consider the differential equation
\begin{equation} \label{eq:1}
\left\{
\begin{aligned}
         f''(s)+kf(s) &= 0 \\
                  f(0)&=0 \\
                  f(\ell)&=1\\
\end{aligned} \right.
\end{equation}
One easily verified
\begin{equation} \label{eq:uniJacobi}
f(s)=\frac{\sk_{k}(s)}{\sk_{k}(\ell)}
\end{equation}
is the solution.
For any point $x\in M$, let $p\in \partial M$ such that $\dis(p, x)=\rho(x)$. Let $\gamma : [0, \ell]\to M$ be a shortest geodesic joining $x$ to $p$, i.e. $\gamma(0)=x, \gamma(\ell)=p$ and $\ell =\rho(x)$. By the first variation formula, $\gamma'(\ell)\perp T_p(\partial M)$. For and vector field $V$ normal to $\gamma$, the second variational formula of arc-length gives
\begin{equation}\label{eq:2ndvar}
\begin{aligned}
L''(V, V) &=\int_0 ^\ell \Big( |V'(s)|^2 -|V(s)|^2 K(\gamma'(s), V(s))\Big )ds \\
&\quad -\langle \nabla_{V(\ell)} (\gamma'(\ell)), V(\ell) \rangle \\
&\ge 0,
\end{aligned}
\end{equation}
where $K(v, w)$ denotes the sectional curvature of the plane spanned by $v$ and $w$. Choose an orthonormal basis $E_1, \cdots, E_{n-1}$ of $T_p(\partial M)$. Let $E_i(s)$ be the parallel transportation of $E_i$ along $\gamma$. Define
$$
V_i(s)=f(s)E_i(s)
$$
Let $V=V_i$ in \eqref{eq:2ndvar}, sum over $i$ for $1$ to $(n-1)$ and apply the boundary condition of \eqref{eq:1}, one get
$$
\int_0^\ell \Big( (n-1) f'^2(s)-f^2(s)\Ric (\gamma'(s))\Big)ds -H_{p}\ge 0.
$$
where $\Ric(\gamma'(s))$ denotes the Ricci curvature along the direction $\gamma'(s)$ and $H_p$ denotes the mean curvature of $\partial M$ at $p$ w.r.t. the inner normal direction $-\gamma'(\ell)$:
$$
H_p=\sum_{i=1}^{n-1} \langle \nabla_{V_i(\ell)} \gamma'(\ell), V_i(\ell) \rangle.
$$
Integration by parts for the term $f'^2$ and make use of the differential equation \eqref{eq:1}, one get:
$$
(n-1)[ff']|_0^\ell-H_p\ge 0.
$$
Since $H\ge (n-1)h>0$, we have:
\begin{equation}\label{eq:ine}
f'(\ell)\ge h.
\end{equation}
That is $\ck_{k}(\ell)/\sk_{k}(\ell) \ge h$. Hence the theorem follows from the explicit expressions \eqref{eq:sk} and \eqref{eq:ck}.
\end{proof}

\begin{remark}
It can be seen from the proof that the condition $h>\sqrt{|k|}$ is essential to get an upper bound when solving \eqref{eq:ine}. For positive $k$, the estimates also holds for mean curvature bounded by a negative constant.
\end{remark}

In \cite{CG1971}, Cheeger and Gromoll proved the celebrate splitting theorem for open manifold with non-negative Ricci curvature, the key step is that the sum two Busemann functions constructed out of a line is super-harmonic and achieves a minimum along this line, hence the minimal principle of super-harmonic function implies it must be constant. Our proof of the Rigidity \autoref{thm:rigidity} is similar to Cheeger-Gromoll's proof. Note Li's proof of the rigidity for $k=0$ given in \cite{Li2012} is along a slightly different line, where a rigidity theorem for isoperemetric inequality plays an important role.
\begin{proof}[Proof of Rigidity \autoref{thm:rigidity}]
Since $\partial M$ is compact, \autoref{thm:k} implies that $M$ itself is compact. Hence there exists $x_0\in M$ such that $\rho(x_0)=h=\max_M (\rho(x))$. Define
$$
r(x)=\dist(x, x_0).
$$
The classical Laplacian comparison theorem implies that
\begin{equation}\label{eq:LapR}
\Delta r(x)\le \H(r(x)).
\end{equation}
\autoref{thm:realLapComp} implies
\begin{equation}\label{eq:LapRoh}
\Delta \rho(x)\le -\H(h-\rho(x)).
\end{equation}
Follow the idea of Cheeger-Gromoll mentioned above, one define:
$$
F(x)=r(x)+\rho(x)-h.
$$
By triangle inequality, we clearly have $F(x)\ge 0$ for $x\in M$. Let $\gamma: [0, h]\to M$ be a length minimizing geodesic joining $x_0$ and $p\in\partial M$. Hence $F(\gamma(t))\equiv 0$. One calculate
\begin{equation}\label{eq:LapF}
\begin{aligned}
\Delta F(x)&=\Delta r(x) +\Delta \rho(x)\\
&\le \H(r(x))-\H(h-\rho(x))\\
&\le \H(r(x))-\H(r(x))\\
&\le 0
\end{aligned}
\end{equation}
where we used \eqref{eq:LapR} and \eqref{eq:LapRoh} for the first inequality. The second inequality follows by the triangle inequality $h-\rho(x)\le r(x)$ and the fact that $\H(t)$ is monotone decreasing. Therefore $F$ is a nonnegative super-harmonic function on $M$ and achieves the minimal $0$ at some interior point $\gamma(s)$ for $s\in (0, h)$. Hence $F$ must be identically $0$. This shows the smoothness of the $r$ and $\rho$ in $M- \{x_0, \partial M\}$. In fact if $x\in M- \{x_0, \partial M\}$, then $x$ can be jointed to both $x_0$ and $\partial M$ by geodesic segments $\sigma_1$ and $\sigma_2$. If we put this two segment together then it has length $h=\dist(x_0, \partial M)$, such a segment must be smooth. Hence $r$ is smooth and $\rho=h-r$ is also smooth. Also note that the geodesic never bifurcate, which implies the uniqueness of such geodesic connecting $x_0$ to $\partial M$ passing through $x$.  Hence $M=D_M(x_0, h)$, $\partial M=\partial D_M(x_0, h)$ and moreover
$$
\exp: B_{T_{x_0}M}(o, h)\to M
$$
is a diffeomorphism.
The equality $\Delta F=0$ also implies
$$
\Delta r=H_k(r)=(n-1)\frac{\ck_k(r)}{\sk_k(r)},
$$
Taking derivative of $\H(r)$ with respect to the direction $\partial_r:=\partial/\partial r$ one get
\begin{equation}\label{eq:hessiancal}
\begin{aligned}
-(n-1)k&=\partial_r(\Delta r) +\frac{(\Delta r)^2}{n-1}\\
&\le \partial_r(\Delta r) + |\hess r|^2\\
&= -\Ric(\partial_r, \partial_r)\\
&\le -(n-1)k,
\end{aligned}
\end{equation}
where the first inequality is Cauchy-Schwartz and the second equality is Weitzenb\"ock formula applied for distance function $r$. Hence all inequalities in \eqref{eq:hessiancal} are equalities. In particular equality in Cauchy-Schwartz implies that Hessian is diagonal: $\hess r = \ck_k(r)/\sk_k(r) g_r$ for $0<r<h$. Therefore the metric can be written in polar coordinate as
$$
g=dr^2+\sk_k^2(r) ds_{n-1}^2
$$
where $ds_{n-1}^2$ is the standard metric on $(n-1)$-dimensional sphere of sectional curvature 1. Hence $M$ is isometric to the disk of radius $h$ in space form $\SS^n_k$.
\end{proof}

\section{Comparison Theorems for \kahler\ Manifolds}\label{sec:kahler}
We first give a proof similar to the proof of \autoref{thm:k}.
\begin{proof}[Proof of \autoref{thm:kahler}]
Fix $x\in M$. Let $p\in \partial M$ be a point such that $\dist(x, p)=\rho(x)$. It is know that under the assumption $\bisec \ge 2$ the diameter of $M$ is less than $\pi/2$ see \cite{LW2005}. Hence we can assume $\ell \le \pi/2$ for the case $k=1$. Let $\gamma: [0, \ell] \to M$ be a geodesic such that $\gamma(0)=x, \gamma(\ell)=p$ and $\ell=\rho(x)$. Choose an orthonormal basis of $T_pM$:
$$
\{E_1, E_2, \cdots, E_{2n-1}, E_{2n}\},
$$
satisfying $E_{2k}=JE_{2k-1}$ for $k=1, \cdots, (n-1)$ and $E_{2n}=\gamma'(\ell)$. Denote by $E_i(s)$ the parallel translation of $E_i$ along $\gamma$. Define:
$$
f(s)=\frac{\sk_{k}(s)}{\sk_{k}(\ell)}, \quad g(s)=\frac{\sk_{4k}(s)}{\sk_{4k}(\ell)}.
$$
and
$$
V_i(s)=f(s)E_i(s),\ \text{for } i=1, \cdots, 2n-2, \quad V_{2n-1}(s)=g(s)E_{2n-1}(s).
$$
By the second variational formula one have
\begin{equation} \label{eq:3}
\begin{split}
0&\le \sum_{i=1}^{2n-1} \delta^2(V_i, V_i) \\
&=\int_0^\ell \Big ( (2n-2) f'^2(s) -\sum_{i=1}^{2n-2} f^2(s) K(\gamma', E_i) \\
&\quad\quad+ g'^2(s) - g^2(s) K(\gamma', J\gamma') \Big )ds- H_p\\
& = (2n-2)\ck_k(\ell)/\sk_k(\ell) +\ck_{4k}(\ell)/\sk_{4k}(\ell) - H_p\\
\end{split}
\end{equation}
i.e.
$$
\KH(\ell) \ge \KH(h).
$$
By the monotonicity of $\KH$, we have $\ell \le h$.
\end{proof}

The distance estimate can also be derived from the following Hessian comparison theorem. But let us recall first some basic definitions, which can be found in \cite{BC1964}. Let $N\in M$ a submanifold, then the $N$-Jacobi field along a geodesic $\gamma: [0, b]\to M$ with $\gamma(0)=p\in N$ and $\gamma'(0)\perp T_p(N)$ is the unique Jacobi field $J$ satisfying:
$$
J(0)\in T_p(N), \nabla_{\gamma'(0)}J (0) - S_{\xi} J(0) \in T_p^{\perp}N,
$$
where $S_{\xi}$ denotes the shape operator of $N$ with respect to the normal vector $\xi$, i.e. $\langle S_{\xi}V, W\rangle=\langle \nabla_V\xi, W\rangle$ and $T_p^{\perp}N$ the normal bundle of $N$ in $M$. Recall also that the index form $I_{\gamma}(,)$ associated with $\gamma$, for any vector field $V\perp \gamma'$ along $\gamma$, it is defined by:
$$
I_{\gamma}(V, V)=\langle S_{\gamma'(0)}V(0), V(0) \rangle +\int_0^\ell \Big |V'|^2+|V|^2K(\gamma', V) \Big) ds.
$$
Note that the second fundamental form with respect to the inner normal direction of $\partial M$ is defined by  $\II(V(0), V(0))=-\langle S_{\gamma'(0)}V(0), V(0) \rangle$. We have the following
\begin{prop}[The basic inequality. cf. \cite{BC1964}]\label{prop:Minimality}
Suppose there is no focal point of $N$ on $\gamma (0, \ell]$. For all vector field $V$ along $\gamma$ with $V(0)\in T_pN$, there is an unique $N$-Jacobi field $J$ such that
$J(\ell) = V(\ell)$. Moreover $I_{\gamma}(V)\ge I_{\gamma}(J)$ and equality occurs if and only if $V=J$.
\end{prop}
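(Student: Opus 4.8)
The plan is to prove the two assertions separately: first the existence and uniqueness of the matching $N$-Jacobi field $J$, and then the index-minimizing property with its equality case. For the first part, I would work with the space $\mathcal{J}$ of $N$-Jacobi fields along $\gamma$ that are everywhere perpendicular to $\gamma'$. A dimension count shows $\dim \mathcal{J}=n-1$: the initial value $J(0)$ ranges over $T_pN$, the component of $\nabla_{\gamma'}J(0)$ tangent to $N$ is pinned to $S_{\gamma'(0)}J(0)$ by the $N$-Jacobi condition, while its component normal to $N$ (and to $\gamma'$) is free, so the constraints leave exactly $(n-1)$ free parameters. The hypothesis that $N$ has no focal point on $\gamma(0,\ell]$ says precisely that no nonzero element of $\mathcal{J}$ vanishes on $(0,\ell]$; in particular the evaluation map $J\mapsto J(\ell)$ is injective from $\mathcal{J}$ into $\gamma'(\ell)^{\perp}$. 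Since both spaces have dimension $n-1$, this map is an isomorphism, which gives a unique $J\in\mathcal{J}$ with $J(\ell)=V(\ell)$.

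For the minimality, I would polarize the quadratic form to obtain the symmetric bilinear index form $I_{\gamma}(\cdot,\cdot)$ and set $W=V-J$, so that $W(\ell)=0$ and $W(0)=V(0)-J(0)\in T_pN$. Expanding gives
\begin{equation*}
I_{\gamma}(V,V)=I_{\gamma}(J,J)+2I_{\gamma}(J,W)+I_{\gamma}(W,W).
\end{equation*}
The heart of this step is showing the cross term vanishes. Integrating $\int_0^\ell\langle \nabla_{\gamma'}J,\nabla_{\gamma'}W\rangle\,ds$ by parts and using that $J$ solves the Jacobi equation collapses the interior integral, leaving only boundary contributions; these combine with the shape-operator term to give
\begin{equation*}
I_{\gamma}(J,W)=\langle \nabla_{\gamma'}J(\ell),W(\ell)\rangle-\langle \nabla_{\gamma'}J(0)-S_{\gamma'(0)}J(0),\,W(0)\rangle.
\end{equation*}
The first term is zero because $W(\ell)=0$, and the second is zero because $\nabla_{\gamma'}J(0)-S_{\gamma'(0)}J(0)\in T_p^{\perp}N$ while $W(0)\in T_pN$. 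Hence $I_{\gamma}(V,V)=I_{\gamma}(J,J)+I_{\gamma}(W,W)$, and it remains to prove $I_{\gamma}(W,W)\ge 0$ with equality only for $W\equiv 0$.

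This last inequality is where I expect the main obstacle to lie, since it is exactly where the no-focal-point hypothesis must be exploited quantitatively. I would fix a basis $J_1,\dots,J_{n-1}$ of $\mathcal{J}$; by the absence of focal points these are pointwise linearly independent on $(0,\ell]$, so any admissible $W$ with $W(\ell)=0$ can be written as $W=\sum_i \phi_i J_i$ for functions $\phi_i$ on $(0,\ell]$. A Wronskian-type computation, showing $\langle \nabla_{\gamma'}J_i,J_j\rangle=\langle J_i,\nabla_{\gamma'}J_j\rangle$ (the quantity is constant in $s$ by symmetry of $R$, and vanishes at $s=0$ because $S_{\gamma'(0)}$ is self-adjoint and $\nabla_{\gamma'}J_i(0)-S_{\gamma'(0)}J_i(0)\perp T_pN$), then reduces the index form to
\begin{equation*}
I_{\gamma}(W,W)=\int_0^\ell \Big|\sum_i \phi_i'\,J_i\Big|^2\,ds\ \ge\ 0,
\end{equation*}
with equality forcing each $\phi_i$ constant, hence $\phi_i\equiv 0$ since $W(\ell)=0$, i.e. $W\equiv 0$ and $V=J$. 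The delicate point is the regularity and integrability of the coefficients $\phi_i$ as $s\to 0^+$, where the $J_i(0)\in T_pN$ need not vanish; I would handle the behavior near the initial endpoint by a limiting argument (shrinking the lower limit to $\varepsilon>0$ and letting $\varepsilon\to 0$), so that all boundary terms generated at $s=\varepsilon$ converge correctly and the displayed identity survives in the limit.
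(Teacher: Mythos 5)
The paper contains no proof of this proposition at all: it is quoted as the ``basic inequality'' directly from Bishop--Crittenden \cite{BC1964}, so there is no internal argument to compare yours against. What you have written is, in essence, the classical index-lemma proof for focal points that such references contain, and its main steps are all correct: the dimension count $\dim\mathcal{J}=n-1$ together with injectivity of the evaluation map at $\ell$ (which is exactly the no-focal-point hypothesis) gives existence and uniqueness of $J$; the cross term $I_{\gamma}(J,W)$ vanishes for precisely the boundary reasons you state ($W(\ell)=0$ and $\nabla_{\gamma'(0)}J(0)-S_{\gamma'(0)}J(0)\perp W(0)\in T_pN$); and the vanishing Wronskian $\langle \nabla_{\gamma'}J_i,J_j\rangle-\langle J_i,\nabla_{\gamma'}J_j\rangle\equiv 0$ reduces $I_{\gamma}(W,W)$ to $\int_0^\ell\big|\sum_i\phi_i'J_i\big|^2\,ds$, whose equality case forces $W\equiv 0$. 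The only step left as a sketch is the one you correctly flag: when $\dim N<n-1$, some basis fields vanish at $s=0$, so the $\phi_i$ are a priori defined only on $(0,\ell]$, and your $\varepsilon$-limit needs the boundary term $\langle W,\sum_i\phi_i\nabla_{\gamma'}J_i\rangle(\varepsilon)$ to converge to $\langle S_{\gamma'(0)}W(0),W(0)\rangle$, which is what cancels the shape-operator term in the index form. This is secured by choosing an adapted basis of $\mathcal{J}$, with $J_1,\dots,J_d$ whose initial values form a basis of $T_pN$ and $J_{d+1},\dots,J_{n-1}$ vanishing at $0$ with linearly independent derivatives: since $W(0)\in T_pN$, the components of $W(s)$ along the second group are $O(s)$ while $J_j(s)=s\nabla_{\gamma'}J_j(0)+O(s^2)$, so every $\phi_i$ extends continuously to $[0,\ell]$ and the boundary term converges to the claimed limit because $\sum_i\phi_i(0)J_i(0)=W(0)$. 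With that detail filled in, your proof is complete and agrees with the one in \cite{BC1964}.
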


\autoref{thm:kahlercomp} is a consequence of the following complex Hessian comparison theorem.
\begin{theorem}\label{thm:kahlerHessian}
Let $M^n$ be a \kahler\ manifold of complex dimension $n$ with holomorphic bisectional curvature $\bisec \ge 2k$, where $k=1, 0$ or $-1$. If the second fundamental form of $\partial M$ with respect to inner normal direction $\nu$ satisfy $\II(J\nu, J\nu)\ge \ck_{4k}(h)/\sk_{4k}(h)$ and $\II(V, V) + \II(JV, JV) \ge 2 \ck_{k}(h)/\sk_{k}(h)$ for any $V\perp J\nu$, if $k=1$ we assume $0<h<\pi/2$. Let $\rho(x)=\dis(x, \partial M):=\ell$ be the distance to the boundary. Then for all $\ell<h$, we have
\begin{equation}\label{eq:HessCompKah}
\begin{aligned}
\hess\rho(J\nu, J\nu)&\le -\frac{\ck_{4k}(h-\ell)}{\sk_{4k}(h-\ell)}\\
\hess\rho(V, V) +\hess\rho(JV, JV) &\le -2\frac{\ck_k(h-\ell)}{\sk_k(h-\ell)}
\end{aligned}
\end{equation}
In particular,
\begin{equation}\label{eq:LapCompKah}
\Delta(\rho)(x)\le - \KH(h-\rho(x)) \le -\KH(h)
\end{equation}
\end{theorem}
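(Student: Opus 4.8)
The plan is to derive the pointwise Hessian bounds \eqref{eq:HessCompKah} from the basic inequality \autoref{prop:Minimality}, which expresses $\hess\rho$ as an index form, and then to choose model test fields adapted to the complex structure. Fix an interior point $x$ with $\rho(x)=\ell<h$ at which $\rho$ is smooth, and let $\gamma:[0,\ell]\to M$ be the unit-speed minimizing geodesic from $p=\gamma(0)\in\partial M$, with $\gamma'(0)=\nu$ the inner normal, to $\gamma(\ell)=x$. Since $\gamma$ minimizes the distance from the hypersurface $\partial M$, it carries no focal point in the open interval $(0,\ell)$, so \autoref{prop:Minimality} applies: for a unit $W$ perpendicular to $\nabla\rho(x)=\gamma'(\ell)$ one has $\hess\rho(W,W)=I_\gamma(J_W)$, where $J_W$ is the $\partial M$-Jacobi field with $J_W(\ell)=W$, and $I_\gamma(J_W)\le I_\gamma(V)$ for every competitor $V$ with $V(\ell)=W$ and $V(0)\in T_p(\partial M)$. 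Thus it suffices to exhibit competitors whose index forms equal the right-hand sides of \eqref{eq:HessCompKah}.

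Next I would set up a parallel complex frame. Choose an orthonormal basis $E_1,\dots,E_{2n}$ of $T_pM$ with $E_{2n}=\nu$, $E_{2n-1}=J\nu$, and $E_{2j}=JE_{2j-1}$ for $j=1,\dots,n-1$, so that $E_1,\dots,E_{2n-2}$ span the $J$-invariant orthocomplement of $J\nu$ in $T_p(\partial M)$; parallel-transport them to $E_i(s)$ along $\gamma$. Because $J$ is parallel, the relations $E_{2n-1}(s)=J\gamma'(s)$ and $E_{2j}(s)=JE_{2j-1}(s)$ persist. The crucial curvature inputs come from $\bisec\ge 2k$: taking $X=Y=\gamma'$ gives the holomorphic sectional curvature bound $K(\gamma',J\gamma')\ge 4k$, while taking $X=\gamma'$ and $Y=V$ for a unit $V\perp\gamma',J\gamma'$ gives the paired bound $K(\gamma',V)+K(\gamma',JV)\ge 2k$. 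This dichotomy is exactly why $\sk_{4k}$ governs the $J\nu$-direction and $\sk_k$ the remaining directions, and why the hypotheses and conclusions are phrased through the pairs $(V,JV)$.

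For the $J\nu$-direction I would test with $V(s)=\phi(s)E_{2n-1}(s)$ where $\phi(s)=\sk_{4k}(h-s)/\sk_{4k}(h-\ell)$, which satisfies $\phi''+4k\phi=0$ and $\phi(\ell)=1$. Using $K(\gamma',J\gamma')\ge 4k$, integrating $\int_0^\ell((\phi')^2-4k\phi^2)\,ds=[\phi\phi']_0^\ell$ by parts via \eqref{eq:cksksum}, and inserting the boundary hypothesis $\II(J\nu,J\nu)\ge\ck_{4k}(h)/\sk_{4k}(h)$ into the term $-\phi(0)^2\II(J\nu,J\nu)$, the contributions at $s=0$ cancel and leave exactly $\hess\rho(J\nu,J\nu)\le-\ck_{4k}(h-\ell)/\sk_{4k}(h-\ell)$. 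For a pair $(V,JV)$ I would test with $\phi(s)V(s)$ and $\phi(s)JV(s)$, now with $\phi(s)=\sk_k(h-s)/\sk_k(h-\ell)$; adding the two index forms, the curvature contribution is controlled by $K(\gamma',V)+K(\gamma',JV)\ge 2k$ and the boundary contribution by $\II(V,V)+\II(JV,JV)\ge 2\ck_k(h)/\sk_k(h)$, and the same telescoping yields the second line of \eqref{eq:HessCompKah}. Finally, summing $\hess\rho(\nabla\rho,\nabla\rho)=0$, the $J\nu$-bound, and the $(n-1)$ paired bounds over an orthonormal frame gives $\Delta\rho\le-\KH(h-\ell)$, and the monotonicity of $\KH$ noted after its definition gives $-\KH(h-\ell)\le-\KH(h)$, establishing \eqref{eq:LapCompKah} and hence \autoref{thm:kahlercomp}.

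I expect the main obstacle to be organizational rather than computational: the individual Hessians $\hess\rho(V,V)$ cannot be bounded under a mere bisectional curvature lower bound, so one must work with the $J$-invariant pairing throughout and verify that the paired test fields are legitimate competitors (their boundary values lie in $T_p(\partial M)$, since $J\nu$ and the $E_i$ are tangent to $\partial M$). A secondary point to handle with care is the regularity of $\rho$: the identity $\hess\rho(W,W)=I_\gamma(J_W)$ requires that $x$ be a smooth point of $\rho$ joined to $\partial M$ by a unique minimizer with no focal point at $\gamma(\ell)$; this holds on a dense open set and, together with the monotonicity of $\KH$, suffices to propagate the Laplacian inequality \eqref{eq:LapCompKah} to all of $M$ in the barrier sense needed for the applications.
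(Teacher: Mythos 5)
Your proposal is correct and takes essentially the same approach as the paper: both apply the basic inequality of \autoref{prop:Minimality} to test fields of the form (model solution)$\,\times\,$(parallel $J$-adapted frame), using the curvature pairing $K(\gamma',J\gamma')\ge 4k$ and $K(\gamma',V)+K(\gamma',JV)\ge 2k$, the boundary convexity to absorb the $s=0$ terms, and a final summation plus monotonicity of $\KH$ for \eqref{eq:LapCompKah}. Indeed your test function $\sk_{4k}(h-s)/\sk_{4k}(h-\ell)$ is exactly the paper's $\tilde f(s)=\bigl(\ck_{4k}(s)-h_1\sk_{4k}(s)\bigr)/f(\ell)$ rewritten via the addition formulas \eqref{eq:cksksum}, so even the computations coincide.
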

\begin{proof}
For any $x\in M$ let $p\in \partial M$ be a point such that $\rho(x)=\dist(x, \partial M)=:\ell$. Let $\gamma: [0, \ell]\to M$ be a geodesic from $p$ to $x$. By \autoref{thm:kahler}, $\ell \le h$. It suffices to estimate the Laplacian for non cut point $x$. Let $f$ be the solution of the differential equation:
\begin{equation}\label{eq:comple4K}
\left\{
\begin{aligned}
f'' + 4kf&=0,\\
f(0)=1, \quad &f'(0)=-h_1,
\end{aligned} \right.
\end{equation}
where $h_1=\ck_{4k}(h)/\sk_{4k}(h)$. The explicit solution is
$$
f(s)= \ck_{4k}(s)-h_1 \sk_{4k}(s).
$$
Also let $g$ be the solution of
\begin{equation}\label{eq:compleK}
\left\{
\begin{aligned}
g'' + kg&=0,\\
g(0)=1, \quad &g'(0)=-h_2,
\end{aligned} \right.
\end{equation}
where $h_2=\ck_{k}(h)/\sk_{k}(h)$. The explicit solution is
$$
g(s)= \ck_{k}(s)-h_2 \sk_{k}(s).
$$

Since $\ell<h$, one easily verify that $f(\ell)$ and $g(\ell)$ are non-zero, hence we can define
$$
\tilde{f}(s)=f(s)/f(\ell), \quad \tilde{g}(s)=g(s)/g(\ell).
$$
and let $\{E_1, \cdots, E_{2n}\}$ be an orthonormal basis of $T_pM$ such that $JE_{2i-1}=E_{2i}$ for $i=1, \cdots, (n-1)$ and $E_{2n}=\gamma' (0)$. Parallel translate the $E_i$ along $\gamma$, one get $E_i(s)$ and define
$$
V_i(s)=\tilde{g}(s) E_i, \ \text{for }\ i =1, \cdots, 2n-1, \ \text{and }\ V_{2n-1}(s)=\tilde{f}(s)E_{2n-1}.
$$

Let $J_i$ be the unique $\partial M$-Jacobi field with $J_i(\ell)=E_i(\ell)$. Hence
$$
\nabla^2\rho (E_i(\ell), E_i(\ell))=\nabla^2\rho (J_i(\ell), J_i(\ell))=\langle \nabla_{\gamma'(\ell)}J_i, J_i \rangle = I_{\gamma}(J_i, J_i),
$$

By \autoref{prop:Minimality}, one have $\nabla^2\rho (E_i(\ell), E_i(\ell))\le I(V_i, V_i)$. Hence it suffices to estimate $I(V_i, V_i)$ from above. In fact we have
\begin{equation} \label{eq:f}
\begin{split}
I_{\gamma}(V_{2n-1}, V_{2n-1})&= -\tilde{f}^2(0)\II (E_{2n-1}, E_{2n-1})\\
&\quad+ \int_0^\ell \Big( (\tilde{f}')^2-(\tilde{f})^2K(\gamma', E_{2n-1})\Big) ds\\
&\le - \tilde{f}^2(0) h_1 + \int_0^\ell \Big( (\tilde{f}')^2-4k(\tilde{f})^2\Big) ds\\
&= - \tilde{f}^2(0) h_1 + \tilde{f}'\tilde{f}|_0^{\ell}\\
&= f'(\ell)/f(\ell)\\
&=\frac{-4k\sk_{4k}(\ell)-\ck_{4k}(\ell)\ck_{4k}(h)/\sk_{4k}(h)}{\ck_{4k}(\ell)-\sk_{4k}(\ell)\ck_{4k}(h)/\sk_{4k}(h)}\\
&=-\frac{\ck_{4k}(h-\ell)}{\sk_{4k}(h-\ell)}
\end{split}
\end{equation}
Similar calculation shows that
\begin{equation} \label{eq:g}
I_{\gamma}(V_{2i-1}, V_{2i-1}) + I_{\gamma}(V_{2i}, V_{2i}) \le 2 g'(\ell)/g(\ell)=-2\frac{\ck_k(h-\ell)}{\sk_k(h-\ell)}
\end{equation}

Sum up for $i=1$ to $2n-1$, one get
\begin{equation} \label{eq:comp1}
\begin{split}
\Delta\rho &\le I_{\gamma}(V_{2n-1}, V_{2n-1})+ \sum_{i=1}^{n-1} I_{\gamma}(V_{2i-1}, V_{2i-1}) + I_{\gamma}(V_{2i}, V_{2i})  \\
&\le f'(\ell)/f(\ell) + 2(n-1) g'(\ell)/g(\ell)\\
&\le -\KH(h-\ell).
\end{split}
\end{equation}
The second inequality in \eqref{eq:LapCompKah} follows from the \autoref{thm:kahler} and monotonicity of $\KH$.
\end{proof}

Using the same idea of \cite{LW2012}, we can estimate $\lambda_0$ as follows.

\begin{proof}[Proof of \autoref{thm:eigenComp}]
Let $f$ be the eigenfunction correspond to $\lambda_0$. i.e. $f$ satisfies
\begin{equation}\label{eq:Laplace}
\left\{
\begin{aligned}
\Delta f+\lambda_0 f&=0,\quad &\text{on } M,\\
f&=0,\quad &\text{on } \partial M.
\end{aligned} \right.
\end{equation}
We can assume $f>0$ on $M$. For any $C^2$ function $g: M\to \RR$, we let
$$
F(x)=f(x)e^{g(x)}.
$$
Clearly $F\ge 0$ on $M$ and $F|_{\partial M}=0$. Hence by compactness of $M$,  $F$ reaches its maximum at an interior point, say $p_0\in M$. Hence at $p_0$ we have
\begin{equation*}
\nabla F(p_0)=(\nabla f \cdot e^g + fe^g \nabla g)|_{p_0}=0;
\end{equation*}
where $\nabla f$ denotes the gradient vector of $f$. Hence at $p_0$:
\begin{equation*}
\nabla f=-f\nabla g.
\end{equation*}
One calculates at $p_0$:
\begin{equation}
\begin{aligned}
0\ge \Delta F&=\Delta(fe^g)\\
&= e^g(\Delta f +2 \langle \nabla f, \nabla g\rangle +f \Delta g + f\langle \nabla g, \nabla g\rangle )\\
&= e^g(-\lambda_0 f +2\langle -f\nabla g, \nabla g\rangle +f \Delta g + f\langle \nabla g, \nabla g\rangle )\\
&= e^g f(-\lambda_0 -|\nabla g|^2 +\Delta g).
\end{aligned}
\end{equation}
Therefore
\begin{equation}\label{eq:lapcomp01}
\lambda_0\ge (\Delta g-|\nabla g|^2)|_{p_0}.
\end{equation}
for and $g\in C^2(M)$ and $p_0$ depends on $g$. Hence to get a lower bound for $\lambda_0$ one let $g(x)=-c\rho(x)$, where $\rho(x)=\dist(x, \partial M)$ and $c$ is some positive constant to be determined. Hence by \autoref{thm:kahlercomp}, the following inequalities hold in barrier sense
$$
\Delta g \ge c \KH(h), \quad |\nabla g|\le c
$$
Hence by \eqref{eq:lapcomp01}, the following inequality holds for all $c\ge 0$
\begin{equation}\label{eq:lapcomp02}
\lambda_0\ge c(\KH(h) -c).
\end{equation}
One easily see the maximum of the right hand side of \eqref{eq:lapcomp02} is
$$
\Big ( \frac{\KH(h)}{2}\Big)^2,
$$
which is achieved when $c=\KH(h)/2\ge 0$.
\end{proof}

\bibliographystyle{amsalpha}

\bigskip
\noindent
{\small Jian Ge}\\
{\small Max Planck Institute for Mathematics } \\
{\small Vivatsgasse 7}\\
{\small 53111 Bonn, Germany} \\
{\small jge@mpim-bonn.mpg.de} \\
\end{document}